\documentclass[leqno,english]{amsart}
\usepackage{amsfonts,amssymb,amsmath,amsgen,amsthm}
\usepackage{bbm}
\usepackage{mathrsfs}
\usepackage{hyperref}
\usepackage{xcolor}
\newcommand{\msc}[2][2000]{%
  \let\@oldtitle\@title%
  \gdef\@title{\@oldtitle\footnotetext{#1 \emph{Mathematics subject
        classification.} #2}}%
}

\theoremstyle{plain}
\newtheorem{theorem}{Theorem}[section]

\newtheorem{lemma}[theorem]{Lemma}
\newtheorem{corollary}[theorem]{Corollary}
\newtheorem{proposition}[theorem]{Proposition}

\theoremstyle{remark}
\newtheorem{remark}[theorem]{Remark}

\def\dis
{\displaystyle}

\def\R{{\mathbb R}}
\def\O{\mathcal O}
\def\F{\mathcal F}

\def\En{\mathcal E}

\def\u{\mathbf u}

\def\1{\mathbbm{1}}

\def\({\left(}
\def\){\right)}
\def\<{\left\langle}
\def\>{\right\rangle}
\def\le{\leqslant}
\def\ge{\geqslant}

\def\Eq#1#2{\mathop{\sim}\limits_{#1\rightarrow#2}}
\def\Tend#1#2{\mathop{\longrightarrow}\limits_{#1\rightarrow#2}}

\def\d{{\partial}}
\def\eps{\varepsilon}

\def\si{{\sigma}}

\DeclareMathOperator{\RE}{Re}
\DeclareMathOperator{\IM}{Im}
\DeclareMathOperator{\diver}{div}

\newcommand{\enstq}[2]{\left\{#1~\middle|~#2\right\}}

\numberwithin{equation}{section}

\begin{document}
\title[Dependence of NLS on the power of the nonlinearity]{Dependence of the nonlinear Schr\"odinger flow  
upon the  nonlinearity} 
\author[R. Carles]{R\'emi Carles}
\address{CNRS\\ IRMAR - UMR
  6625\\ F-35000 Rennes, France}
\email{Remi.Carles@math.cnrs.fr}

\author[Q. Chauleur]{Quentin Chauleur}
\address{Univ. Lille, CNRS, Inria, UMR 8524 - Laboratoire Paul Painlevé, F-59000 Lille, France}
\email{quentin.chauleur@inria.fr}

\author[G. Ferriere]{Guillaume Ferriere}
\address{Univ. Lille, CNRS, Inria, UMR 8524 - Laboratoire Paul Painlevé, F-59000 Lille, France}
\email{guillaume.ferriere@inria.fr}

\begin{abstract}
  We consider the defocusing nonlinear
  Schr\"odinger equation in the energy-subcritical case, and
  investigate the dependence of the 
  solution upon the power of the nonlinearity. Special attention is
  paid to the global in time description. The main three aspects
  addressed, in the decreasing order of difficulty, are the limit when
  the total power tends to one, along with the
  connection with the logarithmic Schr\"odinger equation, the
  description when long range effects may be present, and the continuity
  of the scattering operator in the short range case. This text
  resumes the presentation given by the first author at \'Ecole
  polytechnique for the Laurent Schwartz seminar, in May 2026. 
\end{abstract}
\maketitle

\section{Introduction}
\label{sec:intro}

We consider the Cauchy problem associated to the defocusing nonlinear Schr\"odinger equation with power-like nonlinearity,
\begin{equation}
  \label{eq:NLS}
  i\d_t u +\frac{1}{2}\Delta u= |u|^{2\si}u\quad ;\quad u_{\mid t=0}=\phi,
\end{equation}
for $x\in \R^d$, $d \ge 1$, in the  energy-subcritical case, 
$0<\si<\frac{2}{(d-2)_+}$. For clarity, we denote by $\phi_\si$ the
initial data, by $u_\sigma$ the
above solution, as the main goal is to understand the dependence of
$u_\sigma$ with respect to $\sigma$, locally in time and, especially,
globally in time.

Several papers have considered the continuity of the flow map with
respect to the initial data, see
e.g. \cite{CW90,CFH11,Kato95,Kato95corr} for positive results, and 
\cite{KPV01,CCT,CDS12,CaGa25} for lack of continuity (according to the
function space considered). In the present paper, we address the
dependence of the flow map $\phi\mapsto u$ with respect to the
nonlinearity, that is 
with respect to $\si$. Since the nonlinearity is defocusing, the
following quantities are formally conserved by the flow, providing useful
a priori estimates,
\begin{align*}
  &\text{Mass:}\quad \frac{d}{dt}\|u_\si(t)\|_{L^2(\R^d)}^2=0,\\
 & \text{Energy:}\quad \frac{d}{dt}\(\frac{1}{2}\|\nabla
   u_\si(t)\|_{L^2(\R^d)}^2 +
   \frac{1}{\si+1}\|u_\si(t)\|_{L^{2\si+2}(\R^d)}^{2\si+2}\)=0. 
\end{align*}
The
results from \cite{GV79Cauchy} make this statement rigorous: for $\phi_\si\in H^1(\R^d)$, we
have a unique, global, solution $u_\si\in C(\R,H^1)$, and this
solution satisfies the above conservation laws. If in addition
\begin{equation*}
  \phi_\si\in \Sigma = \{f\in H^1(\R^d),\ x\mapsto xf(x)\in L^2(\R^d)\},
\end{equation*}
then $u_\si\in C(\R,\Sigma)$. The questions we address are threefold,
by decreasing order of motivation (and, we believe, of originality):
\begin{enumerate}
\item What can we say about the limit $\si\to 0$? More precisely, up
  to recasting \eqref{eq:NLS}, do we have a convergence to the
  solution of the logarithmic Schr\"odinger equation,
  \begin{equation*}
    i\d_t u+\frac{1}{2}\Delta u = u\ln
    |u|^2?
  \end{equation*}
\item When $\si\le 1/d$, we know that the nonlinear dynamics cannot
  be compared to the linear dynamics in the large time r\'egime (long
  range scattering): can we have a uniform in time continuity of the
  flow map, which would yield some hints on the nature of long range
  effects?
\item When $\si>\si_0(d)$, the Strauss exponent, do we have continuity
  of the nonlinear scattering operator with respect to $\si$? 
\end{enumerate}
We provide rather complete answers to these three questions. The
corresponding statements are given below, as well as a flavor of the
associated proofs. Details can be found in \cite{CCF-p}, on which the
talk is based. 

\section{Preliminary}
\label{sec:prelim}

Resuming the estimates used in order to prove global existence in
$H^1(\R^d)$, based on a fixed point argument relying on Strichartz
estimates, H\"older inequality, and possibly Sobolev embedding (when
$\si>2/d$), it is not difficult to prove:
\begin{proposition}[Local in time continuity]\label{prop:local}
  Let $0<\si<\frac{2}{(d-2)_+}$ and $(\phi_\nu)_{|\nu-\si|\le \eps}$
  bounded in
  $H^1(\R^d)$ for some $\eps>0$ such that $0<\si-\eps$ and $\si+\eps<
  \frac{2}{(d-2)_+}$. Let $T>0$:
  \begin{itemize}
  \item There exists $C$ such that, as $\nu\to \si$,
    \begin{align*}
      &\sup_{t\in [-T,T]}\|u_\nu(t)- u_\si(t)\|_{L^2(\R^d)}\le
        C\|\phi_\nu-\phi_\si\|_{L^2(\R^d)}+ C|\nu-\si|,\\
      & \sup_{t\in [-T,T]}\|u_\nu(t)- u_\si(t)\|_{H^1(\R^d)}\le
        C\|\phi_\nu-\phi_\si\|_{H^1(\R^d)}+C|\nu-\si|^\theta,
    \end{align*}
    for some $\theta\in (0,1]$, which can be taken equal to one if
    $2\si>1$. 
  \item If in addition $(\phi_\nu)_{|\nu-\si|\le \eps}$ is
  bounded in $\Sigma$, then
    \begin{equation*}
      \sup_{t\in [-T,T]}\|u_\nu(t)- u_\si(t)\|_{\Sigma}\le
     C\|\phi_\nu-\phi_\si\|_{\Sigma}+   C|\nu-\si|^\theta,
      \end{equation*}
      for the same $\theta$ as above. 
  \end{itemize}
\end{proposition}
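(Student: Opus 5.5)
We compare $u_\nu$ and $u_\si$ through Duhamel's formula on $[-T,T]$, using Strichartz estimates and the uniform a priori bounds from the conservation laws.

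\textbf{Step 1: uniform bounds.} By mass and energy conservation (valid by \cite{GV79Cauchy}), and since $(\phi_\nu)$ is bounded in $H^1$, the family $u_\nu$ is bounded in $L^\infty(\R;H^1)$ uniformly in $|\nu-\si|\le\eps$. The $L^{2\nu+2}$ part of the energy is controlled by Gagliardo--Nirenberg, uniformly since $\nu$ stays in a compact subset of $(0,2/(d-2)_+)$. Strichartz estimates on $[-T,T]$ then give uniform bounds on $\|u_\nu\|_{L^q([-T,T];W^{1,r})}$ for every admissible pair $(q,r)$. In the $\Sigma$ case, the usual virial-type computation with $J(t)=x+it\nabla$ gives in addition a bound on $\|xu_\nu(t)\|_{L^2}$ that grows at most polynomially in $t$, uniformly in $\nu$.

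\textbf{Step 2: the $L^2$ estimate.} Set $w=u_\nu-u_\si$. Then
\[
  i\d_t w+\tfrac12\Delta w = \(|u_\nu|^{2\nu}u_\nu-|u_\si|^{2\si}u_\nu\)+\(|u_\si|^{2\si}u_\nu-|u_\si|^{2\si}u_\si\).
\]
The second bracket is handled exactly as in the Cauchy theory. By Hölder with admissible pairs, splitting $[-T,T]$ into finitely many intervals on which the Strichartz norms of $u_\si$ are small (their number depends only on the uniform bounds), it is absorbed by Gronwall/bootstrap.

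The first bracket is the new term. The mean value theorem in the exponent gives
\[
  \bigl||z|^{2\nu}-|z|^{2\si}\bigr|\le 2|\nu-\si|\,\bigl|\ln|z|\bigr|\,\(|z|^{2\nu}+|z|^{2\si}\).
\]
Bounding $|\ln|z||\lesssim |z|^{-\delta}+|z|^{\delta}$ for a small $\delta>0$, this term is at most
\[
  C|\nu-\si|\(|u_\nu|^{2\min(\nu,\si)-\delta}+|u_\nu|^{2\max(\nu,\si)+\delta}\)|u_\nu|,
\]
where we use $|u_\si|\lesssim |u_\nu|+|w|$, or treat the mixed case symmetrically. These are pure powers with exponents still in $(0,2/(d-2)_+)$, provided $\eps$ and $\delta$ are small. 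Their dual Strichartz norms are bounded by Step 1, and Strichartz plus Gronwall give the first estimate with rate $|\nu-\si|$.

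\textbf{Step 3: $H^1$ and $\Sigma$.} Apply $\nabla$, and respectively $J(t)$, which acts on gauge-invariant nonlinearities like a derivative. For the new term, the exponent difference is treated as above. For the difference term, $\nabla(|u|^{2\si}u)$ produces $|u_\si|^{2\si-1}$-type factors times $w$, namely $\bigl(|u_\nu|^{2\si}-|u_\si|^{2\si}\bigr)\nabla u_\nu$. When $2\si\ge1$ the map $z\mapsto|z|^{2\si}$ is locally Lipschitz, so this is $\lesssim(|u_\nu|^{2\si-1}+|u_\si|^{2\si-1})|w||\nabla u_\nu|$. Combined with the $L^2$-level bound on $w$ in Strichartz spaces, this closes with $\theta=1$ and the term $\|\phi_\nu-\phi_\si\|_{H^1}$ as stated.

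\textbf{Main obstacle.} The hard case is $2\si<1$, where $z\mapsto|z|^{2\si}$ is only Hölder continuous of order $2\si$. Here I would use $\bigl||a|^{2\si}-|b|^{2\si}\bigr|\le|a-b|^{2\si}$, so the $\nabla w$ equation picks up $|w|^{2\si}|\nabla u_\nu|$. Its $L^1$-type Strichartz norm is controlled by $\|w\|^{2\si}$ in an $L^p$ space that interpolates between the $L^2$ estimate of Step 2 (rate $|\nu-\si|$) and the uniform $H^1$ bound. This gives a rate $|\nu-\si|^\theta$ with some $\theta\in(0,1)$, which explains why the statement only claims $\theta=1$ for $2\si>1$.

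The same Hölder argument applies to the $J(t)w$ equation, which yields the $\Sigma$ statement with the same $\theta$, the finite time $T$ absorbing the polynomial growth in Step 1.
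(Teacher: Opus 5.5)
Your overall route matches the paper's sketch: Duhamel and Strichartz on $[-T,T]$, uniform bounds from the conservation laws, a source term of size $\O(|\nu-\si|)$ obtained from the mean value theorem in the exponent together with $|\ln y|\lesssim y^{-\delta}+y^{\delta}$, and H\"older continuity of $z\mapsto|z|^{2\si}$ as the reason for $\theta<1$. However, Step~2 as written rests on a false bound.

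Your first bracket, $\bigl(|u_\nu|^{2\nu}-|u_\si|^{2\si}\bigr)u_\nu$, involves two different functions. The exponent estimate you invoke only concerns $|z|^{2\nu}-|z|^{2\si}$ at one and the same $z$. The bracket contains $\bigl(|u_\nu|^{2\si}-|u_\si|^{2\si}\bigr)u_\nu$, which has the size of $|w|$, hence of $\phi_\nu-\phi_\si$, and not of $|\nu-\si|$. The inequality $|u_\si|\lesssim|u_\nu|+|w|$ cannot produce a factor $|\nu-\si|$. You must add and subtract a term built on a single function, for instance
\begin{equation*}
  |u_\nu|^{2\nu}u_\nu-|u_\si|^{2\si}u_\si=\bigl(|u_\nu|^{2\nu}-|u_\nu|^{2\si}\bigr)u_\nu+\bigl(|u_\nu|^{2\si}u_\nu-|u_\si|^{2\si}u_\si\bigr).
\end{equation*}
The first term is a genuine source term, controlled uniformly thanks to Step~1. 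The second obeys $\bigl||a|^{2\si}a-|b|^{2\si}b\bigr|\lesssim(|a|^{2\si}+|b|^{2\si})|a-b|$ for every $\si>0$, so it joins the absorption/Gr\"onwall part. This is in fact the splitting you tacitly use in Step~3. The paper takes the mirror image, $\bigl(|u_\nu|^{2\nu}u_\nu-|u_\si|^{2\nu}u_\si\bigr)+\bigl(|u_\si|^{2\nu}-|u_\si|^{2\si}\bigr)u_\si$, which puts the source term on the fixed solution $u_\si$ and the ``as usual'' difference at power $\nu$. Either choice works.

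Second, for $2\si<1$ your H\"older term $|w|^{2\si}|\nabla u_\nu|$ is controlled through Step~2. It therefore feeds back $\bigl(\|\phi_\nu-\phi_\si\|_{L^2}+|\nu-\si|\bigr)^{\theta}$, not just $|\nu-\si|^\theta$. What your argument actually yields is
\begin{equation*}
  \sup_{t\in[-T,T]}\|u_\nu(t)-u_\si(t)\|_{H^1}\le C\|\phi_\nu-\phi_\si\|_{H^1}+C\bigl(\|\phi_\nu-\phi_\si\|_{L^2}+|\nu-\si|\bigr)^{\theta}.
\end{equation*}
This matches the stated form only when $\|\phi_\nu-\phi_\si\|_{L^2}=\O(|\nu-\si|)$, for example when $\phi_\nu=\phi_\si$. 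Your sentence ``this gives a rate $|\nu-\si|^\theta$'' silently drops the data contribution. Keeping the data term linear with a constant depending only on the bounds would in particular give local Lipschitz dependence of the $H^1$ flow at the fixed power $2\si<1$. H\"older continuity alone does not provide that, and the paper's sketch does not detail this point either. You should therefore state precisely which bound you obtain, and the same caveat carries over to your $\Sigma$ estimate.
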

\begin{proof}[Scheme of the proof]
  The function to estimate satisfies
\begin{equation*}
  i\d_t\(u_{\nu}- u_{\sigma}\) +\frac{1}{2}\Delta
  \(u_{\nu}- u_{\sigma}\)=
 \underbrace{|u_{\nu}|^{2\nu}u_{\nu} -
  |u_{\sigma}|^{2\nu}u_{\sigma}}_{\text{"as usual"}}+
 \underbrace{ |u_{\sigma}|^{2\nu}u_{\sigma}
  - |u_{\sigma}|^{2\sigma}u_{\sigma}}_{\text{source term}}.
\end{equation*}
The first group on the right hand side is estimated ``as usual'', with
the technical ingredients recalled above, implying that the difference
$u_\nu-u_\si$ is controlled by the norm of the source term. On the
other hand, Taylor formula yields
\begin{equation*}
  y-y^{1+h}= -hy\ln y \int_0^1 y^{s h} d s,
\end{equation*}
 hence
\[ \left|y-y^{1+h}\right|\lesssim |h| \(y^{1-\eta} + y^{1+\eta}\),\]
for $\eta>0$ arbitrarily small. 
One may think of $y$ as $2\si+1$ homogeneous in $u_\si$, and $h=2(\nu-\si)$
(possibly negative, but always small in absolute value), and the
$L^2$-estimate follows easily. The possible presence of $\theta$ for
the $H^1$ estimate is due typically to the fact that when
differentiating the above equation in space, one faces terms like
\begin{equation*}
  |u_{\sigma}|^{2\nu}\nabla u_{\sigma}
  - |u_{\sigma}|^{2\sigma}\nabla u_{\sigma},
\end{equation*}
and the map $y\mapsto |y|^{2\si}$ is only H\"older continuous when
$2\si<1$. 
\end{proof}

\section{Continuity of the scattering operator}
\label{sec:scatt}

In the case where the power $\si$ is sufficiently large, the previous
continuity can be made uniform in time. The Strauss exponent is given
by
\begin{equation*}
  \si_0(d)= \frac{2-d+\sqrt{d^2+12d+4}}{4d},
\end{equation*}
and we only emphasize the bounds $1/d<\si_0(d)<2/d$.
\begin{theorem}[Global continuity in the (very) short range
  case]\label{theo:global} 
  Let $\si_0(d)<\si<\frac{2}{(d-2)_+}$ and $(\phi_\nu)_{|\nu-\si|\le
    \eps}$  bounded in $\Sigma$ with $\eps>0$ such that
  $\si-\eps>\si_0(d)$ and $\si+\eps<\frac{2}{(d-2)_+}$.
  Then we have the global in time estimate:
  \begin{equation*}
    \sup_{t\in \R}\left\|e^{-i\frac{t}{2}\Delta}\(u_\nu(t)-
      u_\si(t)\)\right\|_{\Sigma}\le
     C\|\phi_\nu-\phi_\si\|_{\Sigma}+   C|\nu-\si|^\theta,
   \end{equation*}
    for the same $\theta$ as in  Proposition~\ref{prop:local}.  
\end{theorem}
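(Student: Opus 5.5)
The idea is to redo the classical proof of scattering in $\Sigma$ for $\si>\si_0(d)$, but \emph{uniformly} in the power $\nu\in[\si-\eps,\si+\eps]$, and then run the difference argument of Proposition~\ref{prop:local} on the whole real line.

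\textbf{Step 1: uniform decay.} We use the operator $J_\nu(t)=x+it\nabla$ together with the pseudo-conformal conservation law. For $\nu\ge 2/d$ this law gives directly
\[
\|J(t)u_\nu\|_{L^2}^2+\tfrac{t^2}{\nu+1}\|u_\nu\|_{L^{2\nu+2}}^{2\nu+2}\le C.
\]
For $\si_0(d)<\nu<2/d$ it instead gives a bound on $t^{d\nu}\|u_\nu\|_{L^{2\nu+2}}^{2\nu+2}$ (Tsutsumi--Yajima / Ginibre--Velo argument). Since $\nu$ ranges over a compact interval and the data are bounded in $\Sigma$, the constants are uniform in $\nu$. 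Combined with the Gagliardo--Nirenberg inequality for $J$, namely $\|f\|_{L^r}\lesssim |t|^{-\delta(r)}\|f\|_{L^2}^{1-\delta}\|Jf\|_{L^2}^{\delta}$, this yields uniform time decay of $\|u_\nu(t)\|_{L^r}$.

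\textbf{Step 2: uniform global Strichartz bounds.} The condition $\si>\si_0(d)$ is exactly what makes this decay integrable in the relevant H\"older/Strichartz exponents. Hence, on $\{|t|\ge T\}$ and for $T$ large, a bootstrap gives
\[
\|u_\nu\|_{L^q(\R;L^r)}+\|\nabla u_\nu\|_{L^q(\R;L^r)}+\|Ju_\nu\|_{L^q(\R;L^r)}\le C
\]
for admissible pairs $(q,r)$, with $C$ and $T$ independent of $\nu$. Since $J$ and $\nabla$ commute with $e^{i\frac t2\Delta}$ in the right way, $\|e^{-i\frac t2\Delta}w\|_\Sigma$ is equivalent to $\|w\|_{L^2}+\|\nabla w\|_{L^2}+\|Jw\|_{L^2}$.

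\textbf{Step 3: difference estimate.} Write the Duhamel formula for $w=u_\nu-u_\si$ as in the proof of Proposition~\ref{prop:local}: an ``as usual'' part plus the source term $|u_\si|^{2\nu}u_\si-|u_\si|^{2\si}u_\si$. On $[-T,T]$ we invoke Proposition~\ref{prop:local}. On $[T,\infty)$ (and symmetrically on $(-\infty,-T]$) we apply Strichartz estimates to $w$, $\nabla w$ and $Jw$.
\begin{itemize}
\item The as-usual terms carry a factor like $\|u\|_{L^{\gamma}([T,\infty);L^{r})}^{2\nu}$, which is small for $T$ large uniformly in $\nu$ by Step 2. They are therefore absorbed, after splitting $[T,\infty)$ into finitely many intervals if needed.
\item The source term is handled with the pointwise bound $|y-y^{1+h}|\lesssim|h|(y^{1-\eta}+y^{1+\eta})$. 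Its size is $|\nu-\si|$ times norms of $u_\si$ at powers $2\si+1\pm\eta$. With $\eta$ small these powers still lie in the range $(\si_0(d),2/(d-2)_+)$ up to adjustment, so they are controlled by the uniform decay of Step 2.
\end{itemize}
For $J$ we use that $J=it\,e^{i|x|^2/2t}\nabla e^{-i|x|^2/2t}$, so $J$ acts on gauge-invariant nonlinearities like a derivative. The terms $(|u|^{2\nu}-|u|^{2\si})Ju$ and their $\nabla$ analogues bring the H\"older loss $\theta$ when $2\si<1$, exactly as in the local proposition.

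\textbf{Main obstacle.} The hard part is making Step 2 genuinely uniform for $\nu$ near $\si$ when $\si<2/d$. There the pseudo-conformal law only gives algebraic decay $t^{-d\nu+\ldots}$ rather than $t^{-2}$, and the margin above $\si_0(d)$ must survive both the perturbation $\nu$ and the $\pm\eta$ losses in the source term. I would fix $\eta$ and the Lebesgue exponents depending only on $\si-\eps-\si_0(d)>0$, and check the integrability exponents once on the whole interval $[\si-\eps,\si+\eps]$.
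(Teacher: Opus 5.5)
Your proposal is correct and follows essentially the paper's argument. Uniform-in-$\nu$ time decay of $\|u_\nu(t)\|_{L^p}$, coming from the pseudo-conformal conservation law, upgrades the local difference estimate of Proposition~\ref{prop:local} to a global one: you use Strichartz estimates for $w$, $\nabla w$ and $J(t)w$ on $|t|\ge T$ (with $\|J(t)w\|_{L^2}=\|x\,e^{-i\frac t2\Delta}w\|_{L^2}$), and the local result on $[-T,T]$.

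One small precision: the derivative of the pure source term $(|z|^{2\nu}-|z|^{2\si})z$ is already $\O(|\nu-\si|)(|z|^{2\si-\eta}+|z|^{2\si+\eta})$. So the H\"older exponent $\theta$ actually enters through the as-usual differences, such as $(|u_\nu|^{2\nu}-|u_\si|^{2\nu})Ju_\si$, which are handled as in the local case.
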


We briefly recall the notion of nonlinear scattering operator. For a
given asymptotic state $u_-$, consider the equation \eqref{eq:NLS}
where the initial value is replaced by a final value (at infinite time),
\begin{equation*}
  e^{-i\frac{t}{2}\Delta}u(t)\Big|_{t=-\infty}=u_-. 
\end{equation*}
If this new problem has a solution defined up to $t=0$, then the map
$W_-:u_-\mapsto u_{\mid t=0}$ is called wave operator. Conversely, if the
solution to the Cauchy problem \eqref{eq:NLS} behaves asymptotically
linearly,
\begin{equation*}
  u(t)\Eq t {+\infty} e^{i\frac{t}{2}\Delta}u_+,
\end{equation*}
for some asymptotic state $u_+$, then $u_+=W_+^{-1}u_{\mid t=0}$ and
the scattering operator $S=W_+^{-1}\circ W_-$ is the map $u_-\mapsto
u_+$. This map is well defined on $\Sigma$ for $\si>\si_0(d)$, as
established initially in \cite{GV79Scatt} (see also \cite{CW92,NakanishiOzawa}). In view of the present
context, we emphasize its dependence upon $\si$ by using the notation
$S_\si$. 
\begin{corollary}[Continuity of the scattering
  operator]\label{cor:scattering} 
  Let $\si_0(d)<\si<\frac{2}{(d-2)_+}$ and
  $\(u_{-,_\nu}\)_{|\nu-\si|\le \eps}$ be a family in $\Sigma$ for
  some $\eps>0$. The
  scattering operator is continuous at $\si$ in the sense 
  that if
  \begin{equation*}
    \|u_{-,\si}-u_{-,_\nu}\|_\Sigma\Tend \nu \si 0,
  \end{equation*}
  then
  \begin{equation*}
    \|S_\si u_{-,\si}-S_\nu u_{-,_\nu}\|_\Sigma=\
    \|u_{+,\si}-u_{+,_\nu}\|_\Sigma \Tend \nu \si 0.
  \end{equation*}
\end{corollary}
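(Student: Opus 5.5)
The plan is to reduce the corollary to Theorem~\ref{theo:global}, applied twice: once for the wave operator $W_-$ (from $t=-\infty$ to $t=0$) and once for the Cauchy problem plus the limit $t\to+\infty$. Write $\phi_\nu=W_{-,\nu}u_{-,\nu}$ and $\phi_\si=W_{-,\si}u_{-,\si}$, the data at $t=0$. Since $u_{+,\nu}=\lim_{t\to+\infty}e^{-i\frac t2\Delta}u_\nu(t)$ in $\Sigma$, Theorem~\ref{theo:global} gives, after letting $t\to+\infty$,
\begin{equation*}
  \|u_{+,\nu}-u_{+,\si}\|_\Sigma\le \sup_{t\in\R}\left\|e^{-i\frac t2\Delta}\(u_\nu(t)-u_\si(t)\)\right\|_\Sigma \le C\|\phi_\nu-\phi_\si\|_\Sigma+C|\nu-\si|^\theta .
\end{equation*}
This needs $(\phi_\nu)$ bounded in $\Sigma$ uniformly for $\nu$ near $\si$. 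It is therefore enough to show $\|\phi_\nu-\phi_\si\|_\Sigma\to0$.

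For the wave operator I would run the proof of Theorem~\ref{theo:global} backward from $t=-\infty$. That theorem is stated for data at $t=0$, but its mechanism presumably runs through the profile $v=e^{-i\frac t2\Delta}u$. Equivalently, one can use the vector field $J(t)=x+it\nabla$ and the pseudo-conformal conservation law, which give uniform decay in $L^{2\si+2}$ at rate $|t|^{-d\si/(\si+1)}$. Combined with Strichartz estimates on time intervals $(-\infty,-T]$, this decay is integrable exactly when $\si>\si_0(d)$.

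The steps, in order, are as follows.
\begin{enumerate}
\item Show that $(u_{-,\nu})$ bounded in $\Sigma$ (which holds since $u_{-,\nu}\to u_{-,\si}$) implies that $\phi_\nu$ is bounded in $\Sigma$, with constants uniform in $\nu\in[\si-\eps,\si+\eps]$. This follows from the standard construction in \cite{GV79Scatt}: a fixed point near $t=-\infty$, then global propagation by the pseudo-conformal law. All exponents stay in a compact range away from $\si_0(d)$ and $2/(d-2)_+$, so the constants are uniform.
\item Estimate the difference $w=u_\nu-u_\si$ on $(-\infty,0]$ through the Duhamel formula from $-\infty$:
\begin{equation*}
  e^{-i\frac t2\Delta}w(t)=u_{-,\nu}-u_{-,\si}-i\int_{-\infty}^t e^{-i\frac s2\Delta}\(|u_\nu|^{2\nu}u_\nu-|u_\si|^{2\si}u_\si\)(s)\,ds .
\end{equation*}
Split the nonlinear difference as in the proof of Proposition~\ref{prop:local}, into the part that is ``as usual'' and the source term.
\item Bound the source term by $|\nu-\si|(|u_\si|^{2\si+1-\eta}+|u_\si|^{2\si+1+\eta})$, and bound its image under $\nabla$ and $J$ by the Hölder-type estimate giving $|\nu-\si|^\theta$. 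Then use the uniform dispersive decay to get integrability in time, after shrinking $\eta$ so that $\si\pm\eta$ stays above $\si_0(d)$.
\item Conclude by a Gronwall/bootstrap argument on $(-\infty,-T]$, with $T$ large so that the nonlinear terms are small, and then use Proposition~\ref{prop:local} on $[-T,0]$. This gives $\|\phi_\nu-\phi_\si\|_\Sigma\lesssim \|u_{-,\nu}-u_{-,\si}\|_\Sigma+|\nu-\si|^\theta\to0$.
\end{enumerate}

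The main obstacle is the uniformity in $\nu$ of the decay estimates near $t=-\infty$, in step 1 and in the smallness in step 4. The pseudo-conformal argument gives decay rates and constants depending on $\nu$. Both must be controlled uniformly on the compact interval $[\si-\eps,\si+\eps]$, which lies strictly above $\si_0(d)$. I would handle this exactly as in the proof of Theorem~\ref{theo:global}, whose global estimate already relies on the same uniform control. Once that is in place, the rest is a direct combination of Theorem~\ref{theo:global} with its time-reversed counterpart.
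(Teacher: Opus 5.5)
Your proposal is correct and follows the route the paper indicates. The corollary comes from combining Theorem~\ref{theo:global} for the forward problem with its counterpart for data prescribed at $t=-\infty$ (joint continuity of $W_-$ in $(\nu,u_-)$), both resting on dispersive decay from the pseudo-conformal law that is uniform for $\nu$ in a compact subset of $(\si_0(d),\frac{2}{(d-2)_+})$. One small slip: the relevant decay rate of $\|u(t)\|_{L^{2\si+2}}$ is $|t|^{-d\si/(2\si+2)}$, not $|t|^{-d\si/(\si+1)}$, and only with the correct rate does the Strichartz time-integrability condition reduce exactly to $\si>\si_0(d)$.
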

The extra argument compared to Proposition~\ref{prop:local} relies on
global explicit decay in time of quantities of the form
$\|u_\si(t)\|_{L^p(\R^d)}$, provided by the a priori estimates
stemming from the pseudo-conformal conservation law, discovered in
\cite{GV79Scatt}. 

\section{Uniform in time convergence, including long range cases}
\label{sec:long}

It is well known that as soon as $\si\le 1/d$, the nonlinear dynamics
generated by \eqref{eq:NLS} and the linear dynamics
$e^{i\frac{t}{2}\Delta}$ can be compared only in the case of the
trivial solution (\cite{Barab}):
\begin{equation*}
  \|u_\si(t)- e^{i\frac{t}{2}\Delta}u_+\|_{L^2(\R^d)}\Tend t \infty
    0\Longleftrightarrow \phi_\si=u_\si=u_+=0.
\end{equation*}
In the critical case $\si=1/d$ and for small data, as established
initially in \cite{Ozawa91,GO93,HN98}, the large time behavior of $u$
is described at leading order by a nonlinear phase modification of the
free dynamics. 

This implies for instance that the flow map cannot be
continuous uniformly in time at $\si=1/d$ (as long range effects are
absent for $\si>1/d$, see \cite{BGTV23}). The general consensus is
that even for $\si<1/d$, modified scattering should be characterized
at leading order by a (nonlinear) phase modification, even
though no general proof of this fact seems to be available so far. Our
approach supports this motto: instead of considering $u$,
we first introduce a time dependent rescaling which counterbalances
the linear dispersive behavior, and we consider the squared modulus of this new
function, that is we set
\begin{equation}\label{eq:rho-init}
\rho_\si(t,y)=  \<t\>^d |u_\si(t,y\<t\>)|^2 \|\phi_\si\|_{L^2}^{-2},\quad\text{where}\quad
  \<t\>=\sqrt{1+t^2}, 
\end{equation}
and the last factor ensures, thanks to the conservation of mass,
that $\rho_\si(t,\cdot)$ is a probability density. Our global in time
convergence result involves the Wasserstein (or
Kantorovich–Rubinstein) distance $W_1$, which can be characterized,
for $\mu_1$ and $\mu_2$ probability measures on $\R^d$,  by 
\begin{equation*}
  W_1(\mu_1,\mu_2):= \sup \enstq{\int_{\R^d} \psi d (\mu_1 -
    \mu_2)}{ \psi \in C(\R^d,\R)  \text{ and }  \|\psi\|_{\rm
      Lip}\le 1} , 
\end{equation*}
where the Lipschitz semi-norm is defined by
\begin{equation*}
 \|\psi\|_{\rm Lip} =\sup_{x\not = y}\frac{|\psi(x)-\psi(y)|}{|x-y|}.  
\end{equation*}
See for instance \cite{Vi03}. 
\begin{theorem}[Global continuity for the rescaled
  modulus]\label{theo:W1} 
   Let $0<\si<\frac{2}{(d-2)_+}$ and $(\phi_\nu)_{|\nu-\si|\le \eps}$
  bounded in $\Sigma$ for some $\eps>0$ such that $0<\si-\eps$ and
  $\si+\eps <  \frac{2}{(d-2)_+}$. If 
   \begin{equation*}
     \phi_\nu\Tend \nu \si \phi_\si\quad\text{in }L^2(\R^d),
   \end{equation*}
   then
   \begin{equation*}
     \sup_{t\in \R}W_1\(\rho_\nu(t),\rho_\si(t)\)\Tend \nu \si 0. 
   \end{equation*}
\end{theorem}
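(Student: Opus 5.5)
We split $\R$ into a bounded window $[-T,T]$ and its complement. On $[-T,T]$ we use Proposition~\ref{prop:local}. Outside, we use uniform-in-$\nu$ a priori bounds, obtained from the pseudo-conformal law, showing that the rescaled densities $\rho_\nu(t)$ and $\rho_\si(t)$ both stay close to a common $t$-independent limit profile, and this closeness is uniform in $\nu$.

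\emph{Bounded times.} $W_1$ between probability densities is dominated by the weighted $L^1$ distance, and the latter is controlled by $\|\<y\>(\rho_\nu-\rho_\si)\|_{L^1}$. By Cauchy--Schwarz, this is bounded by $(\|\<x\>u_\nu\|_{L^2}+\|\<x\>u_\si\|_{L^2})\,\|u_\nu-u_\si\|_{L^2}$, up to $\<t\>$ factors and the normalizing masses, which converge. Since the data are bounded in $\Sigma$, Proposition~\ref{prop:local} bounds $\|xu_\nu(t)\|_{L^2}$ uniformly on $[-T,T]$ and gives $L^2$ convergence there. Hence $\sup_{|t|\le T}W_1(\rho_\nu(t),\rho_\si(t))\to0$ for every fixed $T$. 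If $\phi_\si=0$ the statement is degenerate, so we assume $\phi_\si\ne0$.

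\emph{Large times.} For $|t|\ge T$ we introduce the pseudo-conformal (lens) transform, or equivalently the Galilean operator $J(t)=x+it\nabla$, whose $L^2$ norm is estimated via the pseudo-conformal conservation law of Ginibre--Velo. For $\si<2/d$ this gives a uniform bound $\|J(t)u_\nu\|_{L^2}\lesssim \<t\>^{1-d\nu/2}$, and for $\si\ge2/d$ a uniform bound. The constants are uniform in $\nu$ near $\si$ because the data are bounded in $\Sigma$. Write $u(t,x)=\<t\>^{-d/2}v(t,x/\<t\>)e^{i\frac{t|x|^2}{2\<t\>^2}}$, up to the harmless modification of using $\<t\>$ rather than $t$. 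Then $\rho=|v|^2/\|\phi\|^2$, and $\d_t\rho+\nabla\cdot J_\rho=0$ with a current $J_\rho$ involving $\IM(\bar v\nabla v)/\<t\>^2$ plus a term $O(1/\<t\>^3)\,y\rho$. Since $\|\nabla_y v\|_{L^2}\approx\|J u\|_{L^2}$, the current satisfies $\|J_\rho(t)\|_{L^1}\lesssim \<t\>^{-2}\|v\|_{L^2}\|\nabla v\|_{L^2}\lesssim\<t\>^{-1-d\nu/2}$, which is integrable at infinity uniformly in $\nu\in[\si-\eps,\si+\eps]$. The standard duality formula $W_1(\rho(t),\rho(s))\le\int_s^t\|J_\rho(\tau)\|_{L^1}d\tau$ then shows that $\rho_\nu(t)$ is Cauchy in $W_1$ as $t\to\pm\infty$, uniformly in $\nu$. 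So $\sup_{|t|,|s|\ge T}W_1(\rho_\nu(t),\rho_\nu(s))\le \delta(T)$ with $\delta(T)\to0$ independently of $\nu$.

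\emph{Conclusion and main obstacle.} For $|t|\ge T$ the triangle inequality gives
\[
W_1(\rho_\nu(t),\rho_\si(t))\le W_1(\rho_\nu(t),\rho_\nu(\pm T))+W_1(\rho_\nu(\pm T),\rho_\si(\pm T))+W_1(\rho_\si(\pm T),\rho_\si(t))\le 2\delta(T)+o_{\nu\to\si}(1).
\]
We choose $T$ large first and then let $\nu\to\si$. The delicate step is the uniform-in-$\nu$ decay of $\|Ju_\nu(t)\|_{L^2}$ in the long-range regime $\si\le 1/d$. There the pseudo-conformal identity gives only the weak rate above, so we must check carefully that the resulting rate $\<t\>^{-1-d\nu/2}$ for the current is integrable. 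If the current turns out to decay more slowly, the first fallback is to use the energy bound $\|\nabla u\|_{L^2}\lesssim1$ together with $\|xu\|_{L^2}\lesssim \<t\>$. This gives only $O(\<t\>^{-1})$ in $L^1$, which is not integrable, so the fallback would then require a finer analysis of the momentum term $\IM(\bar v\nabla v)$.
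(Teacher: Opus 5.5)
Your proof is correct and follows essentially the paper's route: Proposition~\ref{prop:local} on $[-T,T]$, then for $|t|\ge T$ the continuity equation $\d_t\rho_\nu+\<t\>^{-2}\diver j_\nu=0$ with $\|\nabla v_\nu(t)\|_{L^2}\lesssim\<t\>^{\max(0,1-d\nu/2)}$ (from the pseudo-conformal law), which makes the current integrable in time uniformly for $\nu\ge\si-\eps>0$, combined with the triangle inequality for $W_1$. Two minor points: with your phase $\frac{t|x|^2}{2\<t\>^2}$ there is no extra $y\rho$ term in the current, since it cancels exactly. Also, the step you flag as delicate is immediate, because $d\nu/2\ge d(\si-\eps)/2>0$, so the fallback discussion is unnecessary.
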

\begin{remark}
  As proven in 
\cite[Lemma~2.1]{HaurayMischler2014}, for any
$s>\frac{d+1}{2}$, there exists $C$ such that for any probability
densities $f$ and $g$,
\begin{equation*}
  \|f-g\|_{H^{-s}(\R^d)}\le C W_1(f,g)^{1/2}. 
\end{equation*}
This implies other convergence results than in Theorem~\ref{theo:W1},
involving Sobolev or Lebesgue 
spaces, which we do not detail here, see \cite{CCF-p}.
\end{remark}

\begin{proof}[Informal proof]
  The heuristics for the proof of the above result goes as follows:
  for $T$ arbitrarily large but fixed, we may invoke the local result
  Proposition~\ref{prop:local} on $[-T,T]$. Pretending that instead of
  working with the Wasserstein distance $W_1$, we considered a normed
  space $X$, write, in view of the Fundamental Theorem of Calculus and
  Minkowski inequality,
  for $t>T$,
  \begin{equation*}
    \|\rho_\nu(t)-\rho_\si(t)\|_X\le
    \|\rho_\nu(T)-\rho_\si(T)\|_X+\int_T^t \|\d_t\rho_\nu(s)\|_X ds + 
\int_T^t \|\d_t\rho_\si(s)\|_X ds. 
  \end{equation*}
The normalized density $\rho_\si$ solves a non-autonomous continuity
equation,
\begin{equation*}
  \d_t \rho_\si +\frac{1}{\<t\>^2}\diver j_\si=0,
\end{equation*}
where $j_\si$ is given by 
\[j_\si = \frac{1}{\|\phi_\si\|_{L^2}^2 }\IM\(\overline v_\si\nabla
v_\si\),\]
and $v_\si$ is related to $u_\si$ via the formula
\begin{equation*}
  u_\si(t,x) =
  \frac{1}{\<t\>^{d/2}}v_\si\(t,\frac{x}{\<t\>}\)e^{i\frac{t}{1+t^2}
    \frac{|x|^2}{2}}, 
\end{equation*}
and solves
\begin{equation*}
  i\d_t v_\si +\frac{1}{2\<t\>^2}\Delta v_\si  =
  \frac{|y|^2}{2\<t\>^{2}}v_\si+
  \frac{1}{\<t\>^{d\si}}|v_\si|^{2\si}v_\si\quad ;\quad v_{\si\mid t=0}=\phi_\si.
\end{equation*}
One can prove the estimates
\begin{align*}
&  \|v_\si(t)\|_{L^2}=\|u_\si(t)\|_{L^2}=\|\phi_\si\|_{L^2},\\
&\|\nabla
  v_\si(t)\|_{L^2}\lesssim
    \<t\>^{\max(0,1-d\si/2)},
\end{align*}
and thus, by Cauchy-Schwarz inequality,
\begin{equation*}
  \frac{1}{\<t\>^2}\|j_\si(t)\|_{L^1(\R^d)}\lesssim
  \frac{1}{\<t\>^2}\<t\>^{\max(0,1-d\si/2)}, 
\end{equation*}
which is integrable in time provided that $\si>0$. We refer to
\cite{CCF-p} for the complete proof.
\end{proof}
We note in passing that setting formally $\si=0$ in the above
argument, we lose integrability in time. The limit $\si\to 0$ turns out
to require a totally different approach.

\section{The limit $\si\to 0$}
\label{sec:log}

In the limit $\si\to 0$, at points where $u\not =0$,
\begin{equation*}
  |u|^{2\si}=\exp \(\si \ln|u|^2\) = 1 + \si \ln|u|^2+\O(\si^2). 
\end{equation*}
Up to a gauge transform ($u\to u e^{it}$), we may replace the
nonlinearity in \eqref{eq:NLS} with $\(|u|^{2\si}-1\)u$. We may then
proceed like in \cite{GMS-p} where the stationary, focusing case is
considered, and let $\si\to 0$ in  
\begin{equation}
  \label{eq:rescaledNLS}
    i\d_t \u_\si +\frac{1}{2}\Delta \u_\si = \frac{1}{\si}\(|\u_\si|^{2\si}-1\)\u_\si\quad
    ;\quad \u_{\si\mid t=0}=\phi_\si.
  \end{equation}
Formally, if $\phi_\si\to \phi_0$ as $\si\to 0$,
$\u_\si$ is expected to converge to the solution of the 
logarithmic Schr\"odinger equation
\begin{equation}
  \label{eq:logNLS}
  i\d_t \u_0 +\frac{1}{2}\Delta \u_0 =\u_0\ln(|\u_0|^2)\quad ;\quad
  \u_{0\mid t=0}=\phi_0. 
\end{equation}
This equation was
introduced in \cite{BiMy76}, and the mathematical study of the Cauchy
problem \eqref{eq:logNLS} started in \cite{CaHa80}.
\begin{remark}
  One may adopt an alternative point of view. If $u$ solves \eqref{eq:NLS}, then
  \begin{equation*}
    \tilde u_\si(t,x) = \si^{1/(2\si)} u(t,x) e^{it/\si}
   \end{equation*}
   solves the PDE in \eqref{eq:rescaledNLS}, but with initial value
   $\si^{1/(2\si)} \phi$. The fact that the factor $ \si^{1/(2\si)} $
   is unbounded as $\si\to 0$
   is an alternative evidence that nonlinear effects must be enhanced
   in the nonlinear Schr\"odinger equation in order to get some
   convergence toward the logarithmic Schr\"odinger equation. 
 \end{remark}
Several aspects indicate
that the limit $\si\to 0$ in \eqref{eq:rescaledNLS} is more involved
than  the previous limit $\nu\to \si>0$. Indeed, it was proven in
\cite{CaGa18} that 
solutions to \eqref{eq:logNLS} enjoy properties which are in sharp
contrast with the dynamics associated to \eqref{eq:NLS}. We emphasize
three of them:
\begin{itemize}
\item The dispersion is enhanced by the nonlinearity: the decay is of
  the order $\tau_0(t)^{-d/2}\approx t^{-d/2}\(\ln t\)^{-d/4}$ instead
  of $t^{-d/2}$ in the linear case.
\item Like for the linear heat equation (and as opposed to the
  scattering theory for \eqref{eq:NLS}), there is a universal Gaussian
  profile, if $\phi_0\in \Sigma\setminus\{0\}$,
  \begin{equation*}
    \tau_0(t)^d\left|\u_0\(t,y\tau_0(t)\)\right|^2
    \|\phi_0\|_{L^2(\R^d)}^{-2} \Tend t {+\infty} \Gamma,
  \end{equation*}
  in Wasserstein distance $W_2$, where $\Gamma$ is given by
  \begin{equation*}
    \Gamma(y)=\frac{e^{-|y|^2}}{\pi^{d/2}}. 
  \end{equation*}
  \item The Sobolev norms of any nontrivial solution are unbounded in
    the large 
    time limit, and we have precisely
    \begin{equation*}
      \|u_0(t)\|_{\dot H^s(\R^d)}\Eq t {+\infty} c(s) \(\ln
      t\)^{s/2},\quad \forall s\in [0,1].
    \end{equation*}
\end{itemize}
The local in time convergence $\u_\si\to \u_0$ turns out to be rather
easy, thanks to a property discovered by Cazenave and Haraux
\cite{CaHa80}.

\begin{theorem}\label{theo:log-loc-temps}
  Let $\eps>0$ and $(\phi_\si)_{0\le \si\le \eps}$ bounded in $\Sigma$. Let $\u_\si$ denote the solution to
  \eqref{eq:rescaledNLS}, and $\u_0$ the solution to
  \eqref{eq:logNLS}. There exists $\eps_0>0$ such that for all $\si\in 
  (0,\eps_0)$, the following holds. There exist $C_0,C_1>0$ such that
  for any $T>0$,  
  \begin{equation*}
  \sup_{t\in [-T,T]}  \|\u_\si(t)-\u_0(t)\|_{L^2(\R^d)}\le \(C_1 \si +
  \|\phi_\si-\phi_0\|_{L^2(\R^d)}\) e^{C_0T}.
\end{equation*}
\end{theorem}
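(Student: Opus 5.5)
We compare $\u_\si$ and $\u_0$ by an $L^2$ energy estimate, relying on the Cazenave--Haraux inequality
\begin{equation*}
  \left|\IM\(\(z_2\ln|z_2|^2-z_1\ln|z_1|^2\)\(\overline{z_2}-\overline{z_1}\)\)\right|\le 2|z_2-z_1|^2,\quad z_1,z_2\in\mathbb C.
\end{equation*}
Set $w=\u_\si-\u_0$ and write the nonlinearity of \eqref{eq:rescaledNLS} as $F_\si(\u_\si)=\frac{1}{\si}(|\u_\si|^{2\si}-1)\u_\si$. Then
\begin{equation*}
  i\d_t w+\frac12\Delta w = \(F_\si(\u_\si)-\u_\si\ln|\u_\si|^2\) + \(\u_\si\ln|\u_\si|^2-\u_0\ln|\u_0|^2\).
\end{equation*}
We multiply by $\overline w$, integrate in space and take imaginary parts. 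The Laplacian contributes nothing. The second group is bounded by $2\|w\|_{L^2}^2$ thanks to the inequality above, and Cauchy--Schwarz handles the first group. This gives
\begin{equation*}
  \frac{d}{dt}\|w(t)\|_{L^2}\le 2\|w(t)\|_{L^2}+\|R_\si(t)\|_{L^2},\qquad R_\si=F_\si(\u_\si)-\u_\si\ln|\u_\si|^2.
\end{equation*}
Gronwall's lemma then yields the statement with $C_0=2$, provided we show $\sup_t\|R_\si(t)\|_{L^2}\le C\si$ with $C$ independent of $T$. Absorbing the factor $1/C_0$ coming from Gronwall into $C_1$ gives the stated form. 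The argument should first be carried out for smooth data and then extended by density, or performed in the weak formulation used in \cite{CaHa80}.

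The bound on the remainder comes from a Taylor estimate. Write $\ell=\ln|\u_\si|^2$, so that $R_\si=\u_\si\(\frac{e^{\si\ell}-1}{\si}-\ell\)$. The elementary inequality $|e^{a}-1-a|\le \frac{a^2}{2}e^{\max(a,0)}$ gives the pointwise bound
\begin{equation*}
  |R_\si|\le \frac{\si}{2}|\u_\si|\,\ell^2\max\(1,|\u_\si|^{2\si}\)\lesssim \si\(|\u_\si|^{1-\eta}+|\u_\si|^{1+\eta}\),
\end{equation*}
for $\si<\eps_0$ small and any small $\eta>0$; this is the same mechanism as in the proof of Proposition~\ref{prop:local}. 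We then need uniform-in-time and uniform-in-$\si$ bounds on $\|\u_\si\|_{L^{2+2\eta}}$ and on $\||\u_\si|^{1-\eta}\|_{L^2}=\|\u_\si\|_{L^{2-2\eta}}^{1-\eta}$.

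These bounds are the main obstacle, since the factor $e^{C_0T}$ leaves no room for norms that grow in time.

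\emph{Large-amplitude part.} Mass conservation holds for \eqref{eq:rescaledNLS}. The energy
\begin{equation*}
  \frac12\|\nabla\u_\si\|_{L^2}^2+\frac1\si\int\(\frac{|\u_\si|^{2\si+2}}{\si+1}-|\u_\si|^2\)
\end{equation*}
is also conserved. Its potential part is bounded below by roughly $\int|\u_\si|^2\ln|\u_\si|^2$, which is negative only where $|\u_\si|<1$; there the negative part is controlled by $\int|\u|^{2-2\eta}$. Combining this with Gagliardo--Nirenberg gives a uniform $H^1$ bound, and hence a uniform bound on $\|\u_\si\|_{L^{2+2\eta}}$. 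The energy at $t=0$ is bounded uniformly in $\si$ because $(\phi_\si)$ is bounded in $\Sigma$.

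\emph{Small-amplitude part.} Here $L^2$ alone is insufficient. We interpolate
\begin{equation*}
  \|f\|_{L^{2-2\eta}}\lesssim\|f\|_{L^2}^{1-d\eta/(2-2\eta)\cdot c}\,\|xf\|_{L^2}^{c'}
\end{equation*}
and use the momentum identity $\frac{d}{dt}\|x\u_\si\|_{L^2}^2=2\IM\int\overline{\u_\si}\,x\cdot\nabla\u_\si$. This shows that $\|x\u_\si(t)\|_{L^2}$ grows at most linearly in $t$. The resulting remainder bound is then $\si\<t\>^{\delta}$ with $\delta$ as small as we like once $\eta$ is chosen small. Such polynomial growth is absorbed into $e^{C_0T}$ by slightly increasing $C_0$, since $\int_0^T e^{2(T-s)}\<s\>^{\delta}ds\le Ce^{C_0T}$. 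This produces the constants $C_0$ and $C_1$ depending only on the bounds on $(\phi_\si)$.
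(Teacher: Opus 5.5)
\textbf{There is a genuine gap in the a priori bounds.} The rest of your architecture matches the paper: the $L^2$ estimate for $w=\u_\si-\u_0$ via Lemma~\ref{lem:CH}, Gronwall, and the pointwise Taylor bound $|R_\si|\lesssim\si(|\u_\si|^{1-\eta}+|\u_\si|^{1+\eta})$.

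The step ``combining this with Gagliardo--Nirenberg gives a uniform $H^1$ bound'' fails. The negative part of the potential energy is controlled by $\int|\u_\si|^{2-2\eta}$, which is an $L^p$ norm with $p<2$. Mass and $\dot H^1$ cannot control such a norm on $\R^d$, since Gagliardo--Nirenberg only reaches $p\ge 2$. You need the weight $\|x\u_\si\|_{L^2}$, and that is exactly the quantity that grows.

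In fact the claimed bound is false. Suppose $\|\nabla\u_\si(t)\|_{L^2}\le C$ uniformly in $\si\in(0,\eps_0)$ and $t$. Take $\phi_\si=\phi_0$ and use $\u_\si(t)\to\u_0(t)$ in $L^2$ for each fixed $t$. Weak lower semicontinuity would then give $\sup_t\|\nabla\u_0(t)\|_{L^2}<\infty$. This contradicts $\|\u_0(t)\|_{\dot H^1}\sim c\sqrt{\ln t}$, recalled in Section~\ref{sec:log}. Your linear growth of $\|x\u_\si(t)\|_{L^2}$, obtained by integrating $\frac{d}{dt}\|x\u_\si\|_{L^2}\le\|\nabla\u_\si\|_{L^2}$, rests on this false bound.

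\textbf{The gap can be closed within your framework by coupling the two estimates.} Mass and energy conservation, together with the weighted interpolation for $L^{2-2\eta}$, give
$\|\nabla\u_\si(t)\|_{L^2}^2\le C+C\|\u_\si(t)\|_{L^{2-2\eta}}^{2-2\eta}\le C+C\|x\u_\si(t)\|_{L^2}^{d\eta}$,
uniformly in $\si$. The virial identity then gives
$\frac{d}{dt}\|x\u_\si\|_{L^2}\le C\bigl(1+\|x\u_\si\|_{L^2}^{d\eta/2}\bigr)$.
For $d\eta<2$ this closes. It yields polynomial growth of $\|x\u_\si(t)\|_{L^2}$, and growth like $\<t\>^{\gamma}$ with $\gamma$ small for $\|\nabla\u_\si(t)\|_{L^2}$. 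Hence $\|R_\si(t)\|_{L^2}\lesssim\si\<t\>^{\delta}$, and your Gronwall step absorbs this even with $C_0=2$, since $\int_0^\infty e^{-2s}\<s\>^{\delta}ds<\infty$. So, contrary to your remark, the factor $e^{C_0T}$ leaves room for any subexponential growth; that is what makes the repair work.

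\textbf{Comparison with the paper.} The paper gets its (also time-growing) $\Sigma$ bounds from Lemma~\ref{lem:apriori-unif-si-t} instead. Undoing \eqref{eq:u-v}, one has $\|x\u_\si(t)\|_{L^2}=\tau_\si(t)\|yv_\si(t)\|_{L^2}\lesssim\tau_\si(t)$. Also $\|\nabla\u_\si(t)\|_{L^2}\le\tau_\si^{-1}\|\nabla v_\si\|_{L^2}+\dot\tau_\si\|yv_\si\|_{L^2}\lesssim 1+\sqrt{\ln\tau_\si(t)}$, using $\dot\tau_\si^2=\frac{1-\tau_\si^{-d\si}}{d\si}\le\ln\tau_\si$. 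These rates are sharper and reflect the true $\sqrt{\ln t}$ behavior. Your repaired route is more elementary, with no $\tau_\si$ rescaling or pseudo-energy, and its cruder growth rates are harmless for this local-in-time statement.
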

The convergence implied by Theorem~\ref{theo:log-loc-temps}
can be understood as a convergence up to some Ehrenfest time, by
analogy with the definition from the semiclassical propagation of
coherent states in (linear) Schr\"odinger equations, see
e.g. \cite{CoRo21}, inasmuch as if
$\|\phi_\si-\phi_0\|_{L^2(\R^d)}=\O(\si)$,  the right hand side goes
to zero as $\si\to 0$, up to  $T= c\ln\frac{1}{\si}$ for any $c\in
(0,1/C_0)$.  
\begin{proof}[Sketch of proof]
  Denote by $w=\u_\si-\u_0$ the error. It solves
\begin{equation}\label{eq:w}
    i\d_t w +\frac{1}{2}\Delta w =
    \frac{1}{\si}\(|\u_\si|^{2\si}-1\)\u_\si - \u_\si\ln(|\u_\si|^2) +
    \u_\si\ln(|\u_\si|^2) -\u_0\ln(|\u_0|^2), 
\end{equation}
with initial value $w_{\mid t=0}=\phi_\si-\phi_0$.  The source term is
\begin{equation}\label{eq:S_si}
  S_\si=  \frac{1}{\si}\(|\u_\si|^{2\si}-1\)\u_\si - \u_\si\ln(|\u_\si|^2) .
\end{equation}
Recall an identity discovered in
\cite{CaHa80}: 
\begin{lemma}[From Lemma~1.1.1 in \cite{CaHa80}]\label{lem:CH}
  There holds
  \begin{equation*}
    \left|\operatorname{Im}\left(\left(z_2 \log
 \left|z_2\right|^2-z_1 \log
 \left|z_1\right|^2\right)\left(\overline{z_2}-
 \overline{z_1}\right)\right)\right| 
    \le 2\left|z_2-z_1\right|^2, \quad \forall z_1, z_2 \in
    \mathbb{C}.
  \end{equation*}
\end{lemma}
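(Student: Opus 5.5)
We prove Lemma~\ref{lem:CH} by reducing to real variables. We may assume $z_1,z_2\neq 0$; otherwise, say $z_1=0$, the left-hand side is $\left|\IM\(z_2\overline{z_2}\ln|z_2|^2\)\right|=0$, with the convention $0\ln 0=0$.

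Expanding the product, the terms $z_2\ln|z_2|^2\,\overline{z_2}$ and $z_1\ln|z_1|^2\,\overline{z_1}$ are real, so they drop out of the imaginary part. What remains is
\begin{equation*}
  \IM\(-z_2\overline{z_1}\ln|z_2|^2 - z_1\overline{z_2}\ln|z_1|^2\).
\end{equation*}
Since $\IM(z_1\overline{z_2})=-\IM(z_2\overline{z_1})$, this equals
\begin{equation*}
  \IM\(z_2\overline{z_1}\)\(\ln|z_1|^2-\ln|z_2|^2\)=2\,\IM\(z_2\overline{z_1}\)\ln\frac{|z_1|}{|z_2|}.
\end{equation*}

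Write $r_j=|z_j|$ and let $\theta$ be the angle between $z_1$ and $z_2$. Then $|\IM(z_2\overline{z_1})|=r_1r_2|\sin\theta|$, and
\begin{equation*}
  |z_2-z_1|^2=(r_1-r_2)^2+2r_1r_2(1-\cos\theta)\ge (r_1-r_2)^2+r_1r_2(1-\cos\theta)
\end{equation*}
Here we drop part of the angular term, keeping room for the elementary inequality below. The goal is therefore
\begin{equation*}
  r_1r_2|\sin\theta|\,\left|\ln\frac{r_1}{r_2}\right|\le |z_2-z_1|^2.
\end{equation*}

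The key step is a mean value bound on the logarithm. By symmetry take $r_1\ge r_2>0$. Since $\ln$ has derivative $1/r\le 1/r_2$ on $[r_2,r_1]$, we get $r_2\ln(r_1/r_2)\le r_1-r_2$. Hence the left-hand side is at most $r_1|\sin\theta|(r_1-r_2)$.

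Now use AM--GM, writing $a=r_1-r_2$ and $b=r_1|\sin\theta|$: we have $ab\le \tfrac12 a^2+\tfrac12 b^2$. It therefore suffices to show
\begin{equation*}
  \tfrac12 r_1^2\sin^2\theta \le \tfrac12 (r_1-r_2)^2+2r_1r_2(1-\cos\theta).
\end{equation*}
This is the main obstacle, because $r_1$ need not be comparable to $r_2$. The plan is to observe that $r_1|\sin\theta|$ is exactly the distance from $z_1$ to the line $\R z_2$, so it is at most $|z_1-z_2|$. This gives $\tfrac12 b^2\le\tfrac12|z_1-z_2|^2$.

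Combining the two halves of the AM--GM bound:
\begin{equation*}
  ab\le \tfrac12(r_1-r_2)^2+\tfrac12|z_1-z_2|^2\le |z_1-z_2|^2,
\end{equation*}
where the last step uses $(r_1-r_2)^2=\big||z_1|-|z_2|\big|^2\le |z_1-z_2|^2$. Multiplying by the factor $2$ from the identity above yields
\begin{equation*}
  \left|\IM\(\(z_2\ln|z_2|^2-z_1\ln|z_1|^2\)\(\overline{z_2}-\overline{z_1}\)\)\right|\le 2|z_2-z_1|^2,
\end{equation*}
which is the claim.
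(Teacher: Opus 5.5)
Your proof is correct. The paper gives no proof of its own and only cites Cazenave--Haraux, and your argument is essentially their standard one: reduce to $2\,|\IM(z_2\overline{z_1})|\,\bigl|\ln(|z_1|/|z_2|)\bigr|$, bound the logarithm by the mean value theorem using the smaller modulus, and use that $|\IM(z_2\overline{z_1})|/|z_2|$ (the distance from $z_1$ to $\R z_2$) is at most $|z_1-z_2|$. The AM--GM step and the unused inequality that drops half the angular term are superfluous, since $a\le |z_1-z_2|$ and $b\le |z_1-z_2|$ already give $ab\le |z_1-z_2|^2$ directly.
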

  Proceeding as usual to derive $L^2$ estimates in Schr\"odinger
equations, we multiply \eqref{eq:w} by $\overline w$, integrate in
space, and take the imaginary part. This yields, in view of
Cauchy-Schwarz inequality and 
Lemma~\ref{lem:CH},
\begin{equation*}
  \frac{1}{2}\frac{d}{dt}\|w(t)\|_{L^2}^2\le
  \|S_\si\|_{L^2}\|w\|_{L^2} + 2\|w\|_{L^2}^2.
\end{equation*}
The theorem follows from Gr\"onwall lemma, provided that we can show
that $\|S_\si(t)\|_{L^2}=\O(\si)$ on $[-T,T]$. 
  Taylor formula applied to $\si\mapsto
(|z|^{2\si}-1)z=(e^{\si \ln|z|^2}-1)z$  yields
\begin{equation}\label{eq:S_si-Taylor}
  S_\si 
= \si \u_\si \(\ln(|\u_\si|^2)\)^2 \int_0^1 (1-\theta)
    |\u_\si|^{2\theta\si}d\theta, 
\end{equation}
hence the pointwise bound
\begin{equation*}
  |S_\si |\lesssim \si \(\ln(|\u_\si|^2)\)^2 \(
  |\u_\si|+|\u_\si|^{2\si+1}\). 
\end{equation*}
For  $\eta>0$ sufficiently small,  we have the uniform  pointwise
estimate (for $0<\si\le \si_0\ll 1$),
\begin{equation*}
  |S_\si|\lesssim \si \(|\u_\si|^{1-\eta} + |\u_\si|^{1+\eta} \),
\end{equation*}
where the implicit constant depends on $\eta>0$. 
We then use the classical embeddings
\begin{equation*}
  H^1(\R^d)\hookrightarrow L^{2+2\eta}(\R^d),\quad \F
  H^1(\R^d)\hookrightarrow L^{2-2\eta}(\R^d) ,
\end{equation*}
provided that $\eta>0$ is sufficiently small, in terms of $d$. The
theorem then follows from $\Sigma$ bounds for $\u_\si$, stemming from
Lemma~\ref{lem:apriori-unif-si-t}  below.
\end{proof}
Consider the family of ordinary differential equations indexed by
$\si\ge 0$,
\begin{equation}\label{eq:tau_si}
  \ddot\tau_\si = \frac{1}{2\tau_\si^{d\si+1}},\quad
  \tau_\si(0)=1,\quad \dot \tau_\si(0)=0.
\end{equation}
The case $\si=0$ was considered in \cite{CaGa18} to describe the large
time behavior of $\u_0$. For $\si>0$, such equations were introduced
in \cite{CCH22} (up to some scaling described below) in order to
analyze the large time dynamics of 
solutions to equations from compressible fluid mechanics or nonlinear
Schr\"odinger equations. In a first step, we wish to emphasize the
following properties:
\begin{itemize}
\item For $\si>0$, $\dis \tau_\si(t)\Eq t
  {+\infty}\frac{t}{\sqrt\si}$.
\item For $\si=0$, $\dis \tau_0(t)\Eq t {+\infty} t\sqrt{\ln t}$.
\item As proven in \cite{CCF-p}, there exists $C>0$ such that
  \begin{equation*}
    |\tau_\si(t)-\tau_0(t)|\le C \si t \(\ln (t+2)\)^{3/2},\quad
    \forall t\ge 0. 
  \end{equation*}
  In particular, for $\si\approx \frac{1}{\ln t}$, $\tau_\si, \tau_0$
  and the above right hand side have the same order of magnitude,
  suggesting that a transition occurs in this r\'egime.
\end{itemize}
Roughly speaking, the transition between the (long range) scattering
behavior associated to \eqref{eq:rescaledNLS} and the specific
dynamics associated to \eqref{eq:logNLS}, recalled above, is given
mostly by the transition at the level of the ordinary differential
equations, as we will see in Theorem~\ref{theo:log-temps-long}. Before
this final statement and a short description of the arguments of the
proof, we complement the understanding of $\tau_\si$, and provide the
announced uniform estimates for $\u_\si$.
\smallbreak

Consider, for $\alpha>0$, the ordinary differential equation
introduced in \cite{CCH22},
\begin{equation*}
  \ddot r_\alpha =\frac{\alpha}{2r_\alpha^{\alpha+1}},\quad
  r_\alpha(0)=1,\quad \dot r_\alpha(0)=0. 
\end{equation*}
Multiplying by $\dot r_\alpha$ and integrating, we find
\begin{equation*}
  \(\dot r_\alpha\)^2=1-\frac{1}{r_\alpha^\alpha}.
\end{equation*}
This readily shows that $r_\alpha(t)\ge 1$ for all $t\in \R$. Since
$\ddot r_\alpha\ge 0$, $\dot r_\alpha\ge 0$ on $[0,+\infty)$, and
$r_\alpha$ is nondecreasing. If it was bounded, then $\ddot r_\alpha $
would be bounded from below away from zero, hence  a contradiction
after integration. Since $r_\alpha(t)\to +\infty$ as $t\to+\infty$, $\dot
r_\alpha(t) \to 1$, hence
\begin{equation*}
  r_\alpha(t)\Eq t {+\infty} t. 
\end{equation*}
One may be surprised at this stage, as this asymptotic behavior is
independent of $\alpha>0$. We note the identity $r_2(t)=\<t\>$, the
dispersive rate 
considered in \eqref{eq:rho-init} and Theorem~\ref{theo:W1}
\smallbreak

Seeking formally an asymptotic expansion
for $r_\alpha$ of the form $r_\alpha (t) =t +
w_\alpha(t)+\text{h.o.t.}$, we come up with
\begin{equation*}
  2\dot w_\alpha = -\frac{1}{t^\alpha},\text{ hence }w_\alpha(t)
  =\frac{1}{2(\alpha-1)} t^{1-\alpha}.
\end{equation*}
We observe that $w_\alpha(t)$ becomes of the same order as the leading
term $t$ for $\alpha\approx \frac{1}{\ln t}$, meeting the above
remark. Finally, we relate $r_\alpha$ and $\tau_\si$ via the scaling
\begin{equation*}
  \tau_\si(t)=r_{d\si}\(\frac{t}{\sqrt\si}\), 
\end{equation*}
and we note that the limit $\si\to 0$ in \eqref{eq:tau_si} is regular,
while the limit $\alpha\to 0$ for $r_\alpha$ is singular.
\smallbreak

The error  in Theorem~\ref{theo:log-loc-temps} stops being
small for $T= \frac{1}{C_0}\ln \frac{1}{\si}$, that is way before the
transition at the ODE level, which occurs for  $\si\approx
\frac{1}{\ln T}$. This suggests that the analysis requires a different
approach to go up to this kind of time, not to mention infinite time,
which is addressed in our main result, Theorem~\ref{theo:log-temps-long} below.

\smallbreak
Generalizing the change of unknown function introduced in
\cite{CaGa18}, let $v_\si$  given by 
\begin{equation}
  \label{eq:u-v}
  \u_\si(t,x) =
  \frac{1}{\tau_\si(t)^{d/2}}v_\si\(t,\frac{x}{\tau_\si(t)}\)
\exp\(i\frac{\dot\tau_\si(t)}{\tau_\si(t)}\frac{|x|^2}{2}\).
\end{equation}
It solves
\begin{equation*}
  i\d_t v_\si +\frac{1}{2\tau_\si^2}\Delta v_\si
  =\frac{|y|^2}{4\tau_\si^{d\si}}v_\si
  +\frac{1}{\si\tau_\si^{d\si}}|v_\si|^{2\si}v_\si -
    \frac{1}{\si}v_\si. 
\end{equation*}
Up to another gauge transform, we get
\begin{equation}\label{eq:v-sigma}
  i\d_t v_\si +\frac{1}{2\tau_\si^2}\Delta v_\si
  =\frac{|y|^2}{4\tau_\si^{d\si}} v_\si
  +\frac{1}{\si\tau_\si^{d\si}}\(|v_\si|^{2\si}-1\)v_\si . 
\end{equation}
This means that we have replaced the initial $v_\si$ with 
\begin{equation}\label{eq:vtilde-v}
  \tilde v_\si (t,y) = v_\si(t,y) \exp \( -i\frac{t}{\si} +
  i\int_0^t\frac{d s}{\si\tau_\si(s)^{d\si}}\). 
\end{equation}
We have dropped the tildas to lighten notations.
\begin{lemma}\label{lem:apriori-unif-si-t}
Let $\eps\in (0,1/d)$, and $(\phi_\si)_{0\le \si\le \eps}$ bounded in
$\Sigma$. There exists $C$ independent of $t\ge 0$ and $\si\in
(0,\eps]$ such 
  that
\begin{align*}
  \frac{1}{\tau_\si(t)^{2-d\si}}\|\nabla v_\si(t)\|_{L^2}^2 
  + \|yv_\si(t)\|_{L^2}^2
  +
\int_{\R^d}
\left|\frac{|v_\si(t,y)|^{2\si}-1}{\si}\right| \lvert v_\si(t,y)\rvert^2d y\le C,
\end{align*}
and 
\begin{equation*}
  \int_0^\infty \frac{\dot \tau_\si(t)}{\tau_\si(t)^{3-d\si}}\|\nabla
  v_\si(t)\|_{L^2}^2d t\le C. 
\end{equation*}
\end{lemma}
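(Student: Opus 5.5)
The plan is to derive the estimates from a Lyapunov functional for \eqref{eq:v-sigma}, in the spirit of \cite{CaGa18}. The functional is an energy weighted by the time factors of the equation; the non-signed part of the nonlinear potential will be absorbed by the moment $\|yv_\si\|_{L^2}^2$.

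\emph{The functional.} Set
\begin{equation*}
  G_\si(v)=\frac{1}{\si}\(\frac{|v|^{2\si+2}}{\si+1}-|v|^2\),
\end{equation*}
so that $\d_{\bar v}G_\si=\frac{1}{\si}(|v|^{2\si}-1)v$ up to the usual normalization. Equation \eqref{eq:v-sigma} is then Hamiltonian, with time-dependent Hamiltonian
\begin{equation*}
  H(t)=\frac{1}{2\tau_\si^2}\|\nabla v_\si\|_{L^2}^2+\frac{1}{\tau_\si^{d\si}}\(\frac14\|yv_\si\|_{L^2}^2+\int_{\R^d} G_\si(v_\si)\,dy\).
\end{equation*}
For such a Hamiltonian, $\frac{d}{dt}H=\d_t H$, where $\d_t$ acts only on the explicit factors of $\tau_\si$. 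I would work instead with $\En(t)=\tau_\si(t)^{d\si}H(t)$, whose potential and harmonic parts have constant coefficients.

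\emph{The differential identity.} A direct computation gives
\begin{equation*}
  \frac{d}{dt}\En=d\si\frac{\dot\tau_\si}{\tau_\si}\En+\tau_\si^{d\si}\d_tH
  =-\(1-\frac{d\si}{2}\)\frac{\dot\tau_\si}{\tau_\si^{3-d\si}}\|\nabla v_\si\|_{L^2}^2 .
\end{equation*}
The terms involving $\|yv_\si\|^2$ and $G_\si$ cancel exactly. Since $d\si\le d\eps<1$ and $\dot\tau_\si\ge0$ for $t\ge0$ (as for $r_\alpha$), the right-hand side is nonpositive, with coefficient at least $1/2$. Integrating on $[0,t]$ yields
\begin{equation*}
  \En(t)+\frac12\int_0^t\frac{\dot\tau_\si}{\tau_\si^{3-d\si}}\|\nabla v_\si\|_{L^2}^2\,ds\le\En(0).
\end{equation*}
The quantity $\En(0)$ is bounded uniformly in $\si$. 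Indeed, $\tau_\si(0)=1$, and $|G_\si(v)|\lesssim |v|^{2-2\eta}+|v|^{2+2\eta}$ uniformly for small $\si$, which follows from the Taylor argument used in the proof of Theorem~\ref{theo:log-loc-temps}. The embeddings $H^1\hookrightarrow L^{2+2\eta}$ and $\F H^1\hookrightarrow L^{2-2\eta}$ then bound $\int G_\si(\phi_\si)$ by the $\Sigma$-norm of $\phi_\si$.

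\emph{Positivity and the main obstacle.} The remaining step, which I expect to be the main difficulty, is that $G_\si$ is not signed. I would split
\begin{equation*}
  G_\si(v)=\frac{1}{\si+1}\,\frac{|v|^{2\si}-1}{\si}|v|^2-\frac{|v|^2}{\si+1}.
\end{equation*}
The last term integrates to a constant by conservation of mass: $\|v_\si\|_{L^2}=\|\phi_\si\|_{L^2}$. In the first term, the contribution of $\{|v|\ge1\}$ is nonnegative. On $\{|v|<1\}$, we have $0\le \frac{1-|v|^{2\si}}{\si}\le C_\eta|v|^{-2\eta}$ uniformly in $\si$, because $\frac{1-s^\si}{\si}\le\ln\frac1s$. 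Hence the negative part is controlled by $\int|v|^{2-2\eta}$. A weighted H\"older inequality gives
\begin{equation*}
  \int|v|^{2-2\eta}\le C\|v\|_{L^2}^{2-2\eta-d\eta'}\|yv\|_{L^2}^{d\eta'},
\end{equation*}
with an exponent of $\|yv\|_{L^2}$ strictly less than $2$ once $\eta$ is small, depending on $d$. Young's inequality then absorbs this into $\frac18\|yv_\si\|^2$.

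\emph{Conclusion.} This gives
\begin{equation*}
  \frac{1}{2\tau_\si^{2-d\si}}\|\nabla v_\si\|^2+\frac18\|yv_\si\|^2+\int\Bigl|\frac{|v_\si|^{2\si}-1}{\si}\Bigr||v_\si|^2\le C\bigl(1+\En(0)\bigr).
\end{equation*}
The absolute value in the third term is recovered by adding back twice the negative part, which has just been bounded. Together with the integrated dissipation term, this yields both estimates of the lemma. Rigorously, the computation is first done for smoother data (or via a regularization), and then extended by the $\Sigma$-continuity of the flow.
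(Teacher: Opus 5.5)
Your proposal is correct and is essentially the paper's proof. You use the same weighted functional $\tau_\si^{d\si}\En_\si$ (yours differs from the paper's only by the constant $\|\phi_\si\|_{L^2}^2/(\si+1)$, by mass conservation). You get the same dissipation identity with factor $1-\frac{d\si}{2}\ge\frac12$. You control the negative part of the potential in the same way, by $\int|v_\si|^{2-2\eta}\lesssim\|yv_\si\|_{L^2}^{d\eta}$, absorbing it with Young's inequality where the paper uses the equivalent sublinear bound $X\lesssim 1+X^{\beta}$ with $\beta<1$.

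One small precision concerns the bound $|G_\si(v)|\lesssim|v|^{2-2\eta}+|v|^{2+2\eta}$, which you use to bound $\En(0)$. It holds uniformly only for $\si$ small compared to $\eta$. For $\si$ up to $\eps<1/d$, use $|v|^{2+2\eps+2\eta}$ instead; this is still controlled by the $H^1$ norm in the energy-subcritical range.
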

\begin{proof}[Sketch of proof]
  Introduce the
pseudo-energy 
\begin{equation}\label{eq:En-sigma}
  \begin{aligned}
    \En_\si(t)&= \frac{1}{2\tau_\si(t)^2}\|\nabla v_\si(t)\|_{L^2}^2 
+ \frac{1}{4\tau_\si(t)^{d\si}}\|yv_\si(t)\|_{L^2}^2\\
&\quad +
\frac{1}{(\si+1)\tau_\si(t)^{d\si}}\int_{\R^d}
  \(\frac{|v_\si(t,y)|^{2\si}-1}{\si}\)|v_\si(t,y)|^2d y.
  \end{aligned}
\end{equation}
  Since the purely time dependent phase function in \eqref{eq:vtilde-v} does not affect
  $ \mathcal E_\si$, we may consider that $v_\si$ solves
  \eqref{eq:v-sigma}.
  We compute
  \begin{align*}
  \dot\En_\si(t)
  &=-\frac{\dot \tau_\si(t)}{\tau_\si(t)}\Bigg( \frac{1}{\tau_\si(t)^2}\|\nabla v_\si(t)\|_{L^2}^2
   +\frac{d\si}{4\tau_\si(t)^{d\si}} \|y v_\si(t)\|_{L^2}^2 \\
 &\quad
   + \frac{d\si}{(\si+1)\tau_\si(t)^{d\si}}\int_{\R^d}
\(\frac{|v_\si(t,y)|^{2\si}-1}{\si}\)|v_\si(t,y)|^2d y           
\Bigg) .
\end{align*}
We note that $\En_\si$  is not sign-definite, and decompose
it as $\En_\si=\En_\si^+-\En_\si^-$, where 
\begin{align*}
  \En_\si^+(t)&= \frac{1}{2\tau_\si(t)^2}\|\nabla v_\si(t)\|_{L^2}^2 
+ \frac{1}{4\tau_\si(t)^{d\si}}\|yv_\si(t)\|_{L^2}^2\\
&\quad +
\frac{1}{(\si+1)\tau_\si(t)^{d\si}}\int_{|v_\si|\ge 1}
  \(\frac{|v_\si(t,y)|^{2\si}-1}{\si}\)|v_\si(t,y)|^2d y,\\
   \En_\si^-(t)&=\frac{1}{(\si+1)\tau_\si(t)^{d\si}}\int_{|v_\si|< 1}
  \(\frac{1-|v_\si(t,y)|^{2\si}}{\si}\)|v_\si(t,y)|^2d y.
\end{align*}
Now $\En_\si^+$ is the sum of three nonnegative terms, and $\En_\si^-$
is nonnegative. Taylor formula for the function
$f(\si)=y^\si$ yields 
\begin{equation*}
  \frac{1-y^\si}{\si} = \ln \frac{1}{y}\int_0^1 y^{\theta \si}d\theta.
\end{equation*}
We infer
\begin{align*}
  \En_\si^-(t)&\le \frac{1}{(\si+1)\tau_\si(t)^{d\si}}\int_{|v_\si|< 1}
             |v_\si(t,y)|^2\ln \frac{1}{|v_\si(t,y)|^2}d y\\
  &\lesssim \frac{C(\eta)}{(\si+1)\tau_\si(t)^{d\si}}\int_{\R^d}
             |v_\si(t,y)|^{2-2\eta}d y,
\end{align*}
where $\eta>0$ is arbitrarily small. In view of the conservation of
the mass and the embedding $\F H^1\hookrightarrow L^{2-2\eta}$, this implies
\begin{equation}\label{eq:est-En-}
  \En_\si^- (t)\lesssim \frac{1}{\tau_\si(t)^{d\si}}\|y
  v_\si(t)\|_{L^2}^{\frac{d\eta}{2-2\eta}}\lesssim
  \frac{1}{\tau_\si(t)^{d\si}}\(
  \tau_\si^{d\si}\En_\si^+\)^{\frac{d\eta}{1-1\eta}} .
\end{equation}
We infer in particular that $\En_\si(0)$ is bounded uniformly in
$\si\in (0,1/d)$. 
In view of the derivative of $\En_\si$, we also have
\begin{equation*}
  \frac{d}{d t}\(\tau_\si^{d\si}\En_\si\) = -\dot\tau_\si
  \tau^{d\si-1}\(1-\frac{d\si}{2}\) \frac{1}{\tau_\si^2}\|\nabla
  v_\si\|_{L^2}^2\le -\dot\tau_\si
  \tau_\si^{d\si-1} \frac{1}{2\tau_\si^2}\|\nabla
  v_\si\|_{L^2}^2,
\end{equation*}
where we have used the facts that $\tau_\si,\dot\tau_\si\ge 0$ and
$\si<1/d$. We obtain the uniform bound
\begin{equation*}
  \En_\si(t)\le \frac{\En_\si(0)}{\tau_\si(t)^{d\si}}\le
  \frac{C}{\tau_\si(t)^{d\si}}. 
\end{equation*}
Invoking \eqref{eq:est-En-}, we have
\begin{equation*}
  \tau_\si^{d\si}\En_\si^+\le C + \tau_\si^{d\si}\En_\si^- \lesssim 1
  + \(  \tau_\si^{d\si}\En_\si^+\)^{\frac{d\eta}{1-\eta}}.
\end{equation*}
Taking $\si>0$ such that $\frac{d\eta}{1-\eta}<1$ shows that
$\tau_\si^{d\si}\En_\si^+$ is bounded uniformly in $t\ge 0$ and
$\si\in (0,1/d)$. Again from \eqref{eq:est-En-}, this implies that so
is $\tau_\si^{d\si}\En_\si^-$, hence
$\tau_\si^{d\si}\(\En_\si^++\En_\si^-\)\le C$, which is the first
claim of the lemma. We infer that $\tau_\si^{d\si}\En_\si$ is uniformly bounded from
below, so its derivative is integrable,
\begin{equation*}
   \int_0^\infty
   \frac{\dot\tau_\si(t)}{\tau_\si(t)^{3-d\si}}\|\nabla
  v_\si(t)\|_{L^2}^2d t\le C,
\end{equation*}
which completes the proof. 
\end{proof}

Our final result is the following uniform in time convergence:
\begin{theorem}\label{theo:log-temps-long}
 Let $\eps>0$, $(\phi_\si)_{0\le \si\le \eps}$ bounded in $\Sigma$,
with $\phi_\si\to \phi_0$ in $L^2(\R^d)$ as $\si\to 0$. Define, for
$0\le \si\le \eps$
\begin{equation*}
    \varrho_\si(t,y) = \tau_\si(t)^d\left\lvert
    \u_\si\(t,y\tau_\si(t)\)\right\rvert^2 \|\phi_\si\|_{L^2}^{-2}.
\end{equation*}
We have the uniform in time convergence in Wasserstein distance:
\begin{equation*} 
  \sup_{t\ge 0}W_1\(\varrho_\si(t),\varrho_0(t) \) \Tend \si 0 0 . 
\end{equation*}
Moreover, if $\| \phi_{\sigma} - \phi_0 \|_{L^2(\R^d)}=\mathcal{O}(\sigma)$, we have the convergence rate
\begin{equation*} 
  \sup_{t\ge 0}W_1\(\varrho_\si(t),\varrho_0(t) \) \lesssim  \frac{1}{ \sqrt{\ln \ln \frac{1}{\sigma}}}.
\end{equation*}
\end{theorem}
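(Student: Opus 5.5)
Following the heuristics behind Theorem~\ref{theo:W1}, we split time into $[0,T]$ and $[T,\infty)$, with $T=T(\si)$ tuned at the end. Note that $\varrho_\si(t,y)=|v_\si(t,y)|^2\|\phi_\si\|_{L^2}^{-2}$ by \eqref{eq:u-v}; the gauge factors do not affect it. The density solves the continuity equation
\begin{equation*}
  \d_t\varrho_\si+\frac{1}{\tau_\si^2}\diver J_\si=0,\qquad J_\si=\|\phi_\si\|_{L^2}^{-2}\IM\(\overline v_\si\nabla v_\si\),
\end{equation*}
obtained from \eqref{eq:v-sigma}, since the potential and nonlinear terms are real multiples of $v_\si$. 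This holds uniformly for $\si\in[0,\eps]$, including $\si=0$ with the analogous equation from \cite{CaGa18}. Hence, for any $1$-Lipschitz $\psi$ and $t_2>t_1$,
\begin{equation*}
\left|\int\psi\,(\varrho_\si(t_2)-\varrho_\si(t_1))\right|\le \int_{t_1}^{t_2}\frac{\|J_\si(s)\|_{L^1}}{\tau_\si(s)^2}ds\le C\int_{t_1}^{t_2}\frac{\|\nabla v_\si(s)\|_{L^2}}{\tau_\si(s)^2}ds.
\end{equation*}
We now use the dissipation bound of Lemma~\ref{lem:apriori-unif-si-t} together with Cauchy--Schwarz:
\begin{equation*}
\int_{T}^{\infty}\frac{\|\nabla v_\si\|_{L^2}}{\tau_\si^2}\le \(\int_T^\infty\frac{\dot\tau_\si\|\nabla v_\si\|^2_{L^2}}{\tau_\si^{3-d\si}}\)^{1/2}\(\int_T^\infty \frac{ds}{\dot\tau_\si\tau_\si^{1+d\si}}\)^{1/2}.
\end{equation*}
The last integral has to be controlled uniformly in $\si$, and this is where the ODE analysis enters. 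For $\si=0$, $\dot\tau_0\approx\sqrt{\ln t}$ and $\tau_0\approx t\sqrt{\ln t}$, so the integrand behaves like $1/(t\ln t)$, which is \emph{not} integrable. The tail therefore cannot be summable uniformly, and we must keep it finite and exploit the slow growth: the bound becomes $(\ln\ln t_2-\ln\ln T)^{1/2}$, which is useless. So we have to refine the estimate.

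The refinement is as follows. Instead of comparing $\varrho_\si(t)$ and $\varrho_0(t)$ via separate tails, for $\si=0$ we use the known convergence $\varrho_0(t)\to\Gamma$ from \cite{CaGa18}, quantified in $W_1$ with a rate in $\ln t$. For $\si>0$ we show that $\varrho_\si(t)$ stays close to $\Gamma$ for $t\in[T,e^{c/\si}]$, and moves by little afterwards. For the latter, at times $t\gg e^{1/\si}$ we have $\tau_\si\approx t/\sqrt\si$ and $\dot\tau_\si\approx 1/\sqrt\si$, so the integrand $\sqrt\si\,\si^{(1+d\si)/2}t^{-1-d\si}$ is integrable with tail of size $\si\, t^{-d\si}/(d\si)\lesssim e^{-d\si\ln t}$, which is small once $\si\ln t\gg1$. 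In the intermediate regime, the pseudo-energy identity with the weight $\tau_\si^{d\si}$ is nearly the $\si=0$ Lyapunov structure, so the convergence-to-$\Gamma$ argument of \cite{CaGa18} (a relative-entropy/energy argument: $\En$ plus the log-entropy approaching its minimum, attained at $\Gamma$) should transfer with errors controlled by $|\tau_\si-\tau_0|\le C\si t(\ln t)^{3/2}$ and by $\si(\ln|v|^2)^2$-type terms as in \eqref{eq:S_si-Taylor}.

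The assembly is then as follows. On $[0,T]$, Theorem~\ref{theo:log-loc-temps} gives $\|\u_\si-\u_0\|_{L^2}\lesssim(\si+\|\phi_\si-\phi_0\|_{L^2})e^{C_0T}$. Rescaling by $\tau_\si$ versus $\tau_0$ (which differ by $O(\si T\ln^{3/2}T)$), and using $W_1(f,g)\lesssim\|f-g\|_{L^1}\,(1+\text{first moments})$ with moments bounded by Lemma~\ref{lem:apriori-unif-si-t}, this yields $W_1\lesssim \si^{1/2}e^{C_0T}$-type control. Choose $T=c\ln\frac1\si$. On $[T,\infty)$, bound $W_1(\varrho_\si,\varrho_0)\le W_1(\varrho_\si,\Gamma)+W_1(\Gamma,\varrho_0)$, where the rate of convergence to $\Gamma$ is a negative power of $\ln t\ge\ln T\approx\ln\ln\frac1\si$; this explains the announced rate $(\ln\ln\frac1\si)^{-1/2}$, coming from the square root in the Cauchy--Schwarz/entropy step. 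The main obstacle is proving the uniform-in-$\si$ quantitative convergence of $\varrho_\si$ toward $\Gamma$ on the window $T\le t\lesssim e^{c/\si}$, together with the proof that no further drift occurs afterwards. What I would try first is to write a modulated entropy $\int\varrho_\si\ln(\varrho_\si/\Gamma)$ combined with $\tau_\si^{d\si}\En_\si$, and show that its time derivative is bounded by the dissipation term from Lemma~\ref{lem:apriori-unif-si-t} plus $O(\si)$ errors.
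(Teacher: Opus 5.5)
Your outer skeleton matches the paper's: Theorem~\ref{theo:log-loc-temps} on $[0,T]$ with $T\approx c\ln\frac1\si$, then the triangle inequality through $\Gamma$, with the rate $(\ln\ln\frac1\si)^{-1/2}$ coming from $W_1(\varrho_0(t),\Gamma)\lesssim(\ln t)^{-1/2}$ for $t\ge T$. That rate is from \cite{Ferriere2021}; \cite{CaGa18} argues by compactness and gives no rate. The $[0,T]$ step is fine once you use that $W_1(f,g)$ is bounded by $\|f-g\|_{L^1}^{1/2}$ times the second moments, not linearly by $\|f-g\|_{L^1}$.

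The gap is the step you yourself flag: uniform-in-$\si$ smallness of $W_1(\varrho_\si(t),\Gamma)$ for all $t\ge T$. This is the core of the theorem, and your two regimes do not fit together. From \eqref{eq:tau_si}, $\dot\tau_\si^2=\frac{1}{d\si}(1-\tau_\si^{-d\si})$. The substitution $\tau=\tau_\si(t)$ then gives exactly $\int_{t_1}^\infty\frac{dt}{\dot\tau_\si\tau_\si^{1+d\si}}=\ln\frac{1}{1-\tau_\si(t_1)^{-d\si}}$. So your drift bound after $t_1$ is small only if $\si\ln t_1\to\infty$; at $t_1=e^{c/\si}$ with $c$ fixed it is of order one. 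Conversely, the approximations you invoke on the intermediate window all require $\si\ln t\ll 1$. These are $\tau_\si^{-d\si}\approx 1$, and $|\tau_\si-\tau_0|\lesssim \si t(\ln t)^{3/2}$, which is already of the size of $\tau_0$ once $\si\ln t\approx 1$. The transition zone $\si\ln t\approx1$, which the paper singles out, is covered by neither estimate.

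The entropy route would not close this gap either. For $\si=0$, the potential part of $\En$ is already, up to an additive constant (mass is conserved), the relative entropy of $|v|^2$ with respect to a Gaussian. Your modulated functional is therefore essentially $\tau_\si^{d\si}\En_\si$ again. Its identity dissipates only the kinetic term $\frac{\dot\tau_\si}{\tau_\si^{3-d\si}}\|\nabla v_\si\|_{L^2}^2$, and nothing in it forces the relative entropy down to its minimum. A bound of the form ``dissipation $+\,O(\si)$'' yields at most boundedness, and even the $O(\si)$ errors would have to be time-integrable over a window of length $e^{c/\si}$.

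The Madelung system is second order in time. Relaxation only appears after two steps: eliminating $J_\si$ through $\d_t(\tau_\si^2\d_t\rho_\si)$, and changing time to $s_\si(t)$. This change maps all of $[T,\infty)$, transition zone included, onto an interval of length of order $\ln\frac1\si$; your Cauchy--Schwarz tail above is in fact proportional to the remaining $s_\si$-time. In this variable $\tilde\rho_\si$ solves $\d_s\tilde\rho_\si=\frac{1}{\si+1}\Delta\tilde\rho_\si^{\si+1}+2\diver(y\tilde\rho_\si)+\mathcal R$. The missing ingredients are:
\begin{itemize}
\item comparison with the Fokker--Planck semigroup, whose contraction $W_2(e^{sL}\phi,\Gamma)\le e^{-2s}W_2(\phi,\Gamma)$ produces the convergence to $\Gamma$;
\item a proof that $\mathcal R$ (the inertial term $\tau_\si^2\d_t^2\rho_\si$ and the dispersive terms) and the porous-medium correction are negligible uniformly in $\si$. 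The paper does this via the duality/regularization technique of \cite{Ferriere2021}, together with fine estimates on $\tau_\si$, $\tau_\si-\tau_0$ and $v_\si$.
\end{itemize}
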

We note that the convergence of $\varrho_\si$ toward $\varrho_0$ holds
uniformly in time, and so supersedes the range  on which $\tau_\si$
converges to $\tau_0$.
\smallbreak

We only describe some steps of the proof and evoke the main technical
ingredients of the rather long proof given in \cite{CCF-p}. First, we
consider the Madelung transform associated to $v_\si$:
\begin{equation*}
  \rho_\si = |v_\si|^2,\quad J_\si =\IM\(\overline v_\si\nabla
  v_\si\). 
\end{equation*}
The hydrodynamical unknowns solve
\begin{equation*}
  \left\{
    \begin{aligned}
      &\d_t \rho_\si +\frac{1}{\tau_\si^2}\diver J_\si=0,\\
      &\d_t J_\si +\frac{1}{(\si+1)\tau_\si^{d\si}}\nabla
        \rho_\si^{\si+1} +\frac{2}{\tau_\si^{d\si}} y\rho_\si =
        \frac{1}{4\tau_\si^2}\Delta \nabla \rho_\si -
        \frac{1}{\tau_\si^2}\diver \nu_\si,
    \end{aligned}
  \right.
\end{equation*}
where $\nu_\si = \RE\(\nabla v_\si\otimes \nabla \overline
v_\si\)$. We eliminate the momentum $J_\si$ by considering $\d_t\(
\tau_\si^2\d_t \rho_\si\)$:
\begin{equation*}
  \d_t\(\tau_\si^2\d_t \rho_\si\) = \frac{1}{(\si +1) \tau_{\si}^{d\si}} \Delta \rho_{\si}^{ \si +1} + \frac{2}{\tau_{\si}^{d\si}} \diver (y \rho_{\si}) - \frac{1}{4 \tau_{\si}^2} \Delta^2 \rho_{\si} + \frac{1}{\tau_{\si}^2} \diver \diver \nu_{\si}.
\end{equation*}
We develop
\begin{equation*}
  \d_t\(\tau_\si^2\d_t \rho_\si\) = 2\tau_\si\dot \tau_\si\d_t
  \rho_\si + \tau_\si^2\d_t^2 \rho_\si,
\end{equation*}
and recall that in the case $\si=0$, the term $\tau_\si^2\d_t^2
\rho_\si$ is negligible in the large time limit (see \cite{CaGa18} for
an argument based on compactness, and \cite{Ferriere2021} for a
quantitative proof). Taking into account the time dependent factor in
front of the first two terms on the right hand side of the equation
for $\d_t\(\tau_\si^2\d_t \rho_\si\) $, we change the time variable
$t$ to $s$, in order to have
\begin{equation*}
  \frac{\d}{\d s} = (1-d\si) \dot\tau_\si(t) \tau_\si(t)^{d\si+1}
  \frac{\d}{\d t}.
\end{equation*}
The factor $1-d\si$ is somehow cosmetic, to shorten formulas in
\cite{CCF-p}. Recalling \eqref{eq:tau_si}, $\dot\tau_\si(t)
\tau_\si(t)^{d\si+1}= \dot\tau_\si(t)/(2\ddot \tau_\si(t))$, so we
compute ``explicitly''
\begin{equation*}
  s=s_\si(t) = \frac{1}{2(1-\d\si)}\ln \dot \tau_\si(t) =
  \frac{1}{4(1-d\si)}
  \ln\(\frac{1}{d\si}\(1-\frac{1}{\tau_\si(t)^{d\si}}\)\). 
  \end{equation*}
  For fixed $\si>0$, the time variable has been compactified, as
  \begin{equation*}
    s_\si(t)\Tend t {+\infty} \frac{1}{4(1-d\si)}
  \ln\(\frac{1}{d\si}\), 
\end{equation*}
but this compact becomes unbounded in the limit $\si\to 0$.
The new unknown function $\tilde \rho_\si$ given by
\begin{equation*}
  \tilde \rho_\si(s_\si(t),y) = \rho_\si(t,y)
\end{equation*}
solves an equation of the form
\begin{equation*}
  \d_s \tilde \rho_\si = \frac{1}{\si+1}\Delta \tilde \rho_\si^{\si+1}
  +2\diver\(y\tilde \rho_\si\) +\mathcal R,
\end{equation*}
where $\mathcal R$ is expected to be negligible in the limit $\si\to
0$, for sufficiently large time. Making this loose statement rigorous
turns out to be the core of the proof of
Theorem~\ref{theo:log-temps-long}. The connection to the large time
behavior of $\u_0$ can then be guessed as follows: forgetting
$\mathcal R$,  we get the porous medium equation,
\begin{equation*}
  \d_s f = \frac{1}{\si+1}\Delta f^{\si+1} +2\diver(y f),
\end{equation*}
for which it was proven initially in \cite{Otto2001} that the solution
converges, in the large time limit, to a Barenblatt profile, in
Wasserstein distance. On the other hand, as $\si\to 0$, this
Barenblatt profile converges to the Gaussian $\Gamma$ (up to suitable
renormalization), see e.g. \cite{Chauleur2022}. The proof in
\cite{CCF-p} consists indeed in measuring the distance between
$\tilde \rho_\si$ and $\Gamma$, by using the following tools and
properties:
\begin{itemize}
\item Formally setting $\si=0$, the right hand side of the porous
  medium equation involves the harmonic Fokker-Planck operator $L$,
  \begin{equation*}
    Lf= \Delta f +2 \diver(yf). 
  \end{equation*}
\item Under suitable assumptions on $\phi$, the following convergence is
  classical (see e.g. \cite{AmbrosioGigliSavare}),
  \begin{equation*}
    W_1\( e^{sL} \phi,\Gamma\)\le W_2\( e^{sL} \phi,\Gamma\)\le
    e^{-2s} W_2\( \phi,\Gamma\).
  \end{equation*}
\item In view of \cite{Ferriere2021},
  \begin{equation*}
    W_1\( \varrho_0(t),\Gamma\)\lesssim \frac{1}{\sqrt{\ln t}}.
  \end{equation*}
  This explains the rate announced in
  Theorem~\ref{theo:log-temps-long}, where, among others, we match the error estimate
  from Theorem~\ref{theo:log-loc-temps} with this estimate, typically at
  $T=\frac{1}{2C_0}\ln\frac{1}{\si}$. 
 \item The duality and regularizing techniques introduced in
   \cite{Ferriere2021} for the case $\si=0$ are resumed to deal with
   the case $0<\si\ll 1$.
 \item We use fine estimates for $\tau_\si$, $\tau_\si-\tau_0$ and
   $v_\si$. 
\end{itemize}
\bibliographystyle{abbrv}
\bibliography{../continuity}

\end{document}